\title[Malnormal subgroups]{Malnormal subgroups of lattices and the Puk\'anszky invariant in group factors}
\date{July 17, 2009}
\subjclass{22D25, 22E40, 20G20}
\author{Guyan Robertson }
\address{School of Mathematics and Statistics, Newcastle University, Newcastle upon Tyne NE1 7RU, England, U.K.}
\email{a.g.robertson@newcastle.ac.uk}
\author{Tim Steger}
\address{Struttura di Matematica e Fisica, Universit\`a di
Sassari, Via Vienna 2, 07100 Sassari, Italia}
\email{steger@ssmain.uniss.it}
\chardef\bslash=`\\ % p. 424, TeXbook
\def\verbatim{\interlinepenalty\@M \@verbatim
  \leftskip\@totalleftmargin\advance\leftskip2pc
  \frenchspacing\@vobeyspaces \@xverbatim}
\newtheorem{theorem}{Theorem}[section]
\newtheorem{corollary}[theorem]{Corollary}
\newtheorem{proposition}[theorem]{Proposition}
\theoremstyle{definition}
\newtheorem{remark}[theorem]{Remark}
\newcommand{\bb}[1]{{\mathbb{#1}}}
\newcommand{\fk}[1]{{\mathfrak{#1}}}
\newcounter{picture}
\newcommand{\tto}{{\text{\rm{II}}}_{1}}
\newcommand{\vn}{{\text{\rm VN}}}
\begin{document}

\begin{abstract}
Let $G$ be a connected semisimple real algebraic group. Assume that $G(\bb R)$ has no compact factors and let $\Gamma$ be a torsion-free uniform lattice subgroup of $G(\bb R)$.  Then $\Gamma$ contains a malnormal abelian subgroup $A$. This implies that the
$\tto$ factor $\vn(\Gamma)$ contains a masa $\fk A$ with Puk\'anszky invariant $\{\infty\}$.
\end{abstract}

\maketitle

\section{Introduction}

A subgroup $\Gamma_0$ of a group $\Gamma$ is {\it malnormal}
if $x\Gamma_0x^{-1} \cap \Gamma_0 = \{ 1 \}$ for all
$x \in \Gamma - \Gamma_0$. An abelian malnormal subgroup is necessarily maximal abelian.
The main result of this article is Theorem \ref{main}, which rests upon work of Prasad and Rapinchuk \cite{PR}.

\begin{theorem}\label{main} Let $G$ be a connected semisimple real algebraic group  and let $d$ be the $\bb R$-rank of $G$. Assume that $G(\bb R)$ has no compact factors and let $\Gamma$ be a torsion-free uniform lattice subgroup of $G(\bb R)$.  Then $\Gamma$ contains a malnormal abelian subgroup $A\cong\bb Z^d$.
\end{theorem}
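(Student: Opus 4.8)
The plan is to realise $A$ as the group of $\Gamma$-points of a carefully chosen maximal $\bb R$-torus and to deduce malnormality from a Zariski-density property of that torus. First, since $\Gamma$ is a lattice and $G(\bb R)$ has no compact factors, the Borel density theorem shows that $\Gamma$ is Zariski-dense in $G$; moreover, because $\Gamma$ is torsion-free and cocompact, every non-trivial element of $\Gamma$ is semisimple. I would then invoke the theorem of Prasad and Rapinchuk \cite{PR} to produce an $\bb R$-regular element $\gamma\in\Gamma$ whose associated maximal torus $T=Z_G(\gamma)^{\circ}$ is irreducible. The defining property of $\bb R$-regularity is that the maximal $\bb R$-split subtorus of $T$ has dimension $d$, so that $T(\bb R)\cong(\bb R^{\times})^{d}\times C$ with $C$ compact. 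Setting $A=\Gamma\cap T(\bb R)$, the centraliser of a semisimple element of a cocompact lattice is again a cocompact lattice in its centraliser, so $A$ is a cocompact lattice in $T(\bb R)$; as $\Gamma$ is torsion-free this forces $A\cong\bb Z^{d}$.

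The heart of the argument is the following reduction. Suppose the torus $T$ has been chosen so that
\[
(\star)\qquad\text{every non-trivial }a\in A\text{ is Zariski-dense in }T.
\]
I claim $(\star)$ already implies that $A$ is malnormal. Indeed, let $x\in\Gamma$ with $xAx^{-1}\cap A\neq\{1\}$ and write $c=xax^{-1}\in A$ with $a\in A\setminus\{1\}$, so $c\neq 1$. By $(\star)$ the Zariski closure of $\langle a\rangle$ is $T$, hence $Z_G(a)=Z_G(T)=T$; thus $a$ is regular and lies in the unique maximal torus $T$. Likewise $c$ is regular and lies in the unique maximal torus $T$. But $c=xax^{-1}$ lies in $xTx^{-1}$, which is then the unique maximal torus containing $c$; comparing gives $xTx^{-1}=T$, so $x\in N_\Gamma(T)$. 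Consequently $xAx^{-1}\subseteq\Gamma\cap(xTx^{-1})(\bb R)=\Gamma\cap T(\bb R)=A$ and, symmetrically, $xAx^{-1}=A$. Let $w\in W(G,T)=N_G(T)/T$ be the image of $x$ and let $m$ be its order, so that $x^{m}\in\Gamma\cap T(\bb R)=A$. Since $x$ commutes with $x^{m}$, the element $x^{m}$ is fixed by $w$, i.e. $x^{m}\in T^{w}$. If $w\neq 1$ then $T^{w}$ is a proper closed subgroup of $T$, because the Weyl group acts faithfully on $T$; were $x^{m}\neq 1$ this would contradict $(\star)$, while $x^{m}=1$ forces $x=1$ since $\Gamma$ is torsion-free. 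In either case $w=1$, that is $x\in\Gamma\cap T(\bb R)=A$. This proves malnormality, and $A\cong\bb Z^{d}$ is the required subgroup.

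The remaining, and genuinely hard, point is to guarantee property $(\star)$: I must choose $T$ so that the rank-$d$ lattice $A$ meets no proper subtorus of $T$ except in the identity. This is a genericity statement about the position of $A$ inside $T$, not merely about the single generator $\gamma$, and it is precisely here that the irreducibility in the Prasad--Rapinchuk construction is indispensable: one wants the Galois and Weyl symmetries of $T$ to be large enough that no non-trivial $\Gamma$-point can be trapped in a proper subtorus. Reconciling their notion of an irreducible $\bb R$-regular element with the clean statement $(\star)$---in particular controlling all of $A$ rather than $\langle\gamma\rangle$ alone---is the main obstacle, and I expect it to absorb most of the technical effort, with the geometry of the maximal flat stabilised by $T$ in the symmetric space serving as a useful cross-check.
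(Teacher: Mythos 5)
Your overall strategy coincides with the paper's up to the point where the real work begins: take the Prasad--Rapinchuk torus $T$ attached to an $\bb R$-regular element of $\Gamma$, set $A=\Gamma\cap T(\bb R)\cong\bb Z^d$, and derive malnormality from an irreducibility property of $T$. The genuine gap is that your key hypothesis $(\star)$ is never established; you flag it yourself as the point expected to ``absorb most of the technical effort,'' so as written the proposal is not yet a proof. What closes the gap is not a new genericity argument about the position of the lattice $A$ in $T$, but an observation you have not exploited: since $\Gamma$ is finitely generated, $\Gamma<G(\bb K)$ for a fixed finitely generated subfield $\bb K\subset\bb R$, and \cite{PR} produce $T$ defined over $\bb K$ with no proper algebraic subgroups (in particular no proper subtori) defined over $\bb K$ --- indeed $T$ is the $\bb K$-Zariski closure of a single element $x_0\in A$. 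Hence for any nontrivial $a\in A$, which is a $\bb K$-point of infinite order by torsion-freeness, the identity component of the Zariski closure of $\langle a\rangle$ is a nontrivial $\bb K$-subtorus of $T$ and therefore equals $T$. That is exactly $(\star)$, for all of $A$ at once; no further control of ``Galois and Weyl symmetries'' is needed beyond what \cite{PR} already supplies.

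Granting $(\star)$, your deduction of malnormality is correct and takes a genuinely different route from the paper's. You use $(\star)$ to see that every nontrivial $a\in A$ satisfies $Z_G(a)=Z_G(T)=T$, hence is regular and lies in the unique maximal torus $T$, forcing $xTx^{-1}=T$; you then dispose of a nontrivial Weyl element $w$ by noting $x^m\in A\cap T^{w}$ with $T^{w}$ a proper closed subgroup of $T$, which contradicts $(\star)$ unless $x^m=1$, and torsion-freeness rules that out. The paper instead derives the dichotomy $T\cap xTx^{-1}=\{1\}$ or $T=xTx^{-1}$ directly from the no-proper-$\bb K$-subgroup property, and in the second case splits according to whether conjugation by $x$ fixes a nontrivial element of $A$: if not, it passes to the symmetric space and uses that $x$ preserves the flat on which the split part of $T(\bb R)$ acts simply transitively, so $x$ has a fixed point there, contradicting freeness of the $\Gamma$-action; if so, it shows $Z_G(x)\cap T$ is a nontrivial $\bb K$-subgroup, hence all of $T$, whence $x\in T(\bb K)$ and $x\in A$. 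Your Weyl-group argument replaces the geometric step with a purely algebraic one and is, if anything, shorter; but until $(\star)$ is actually proved along the lines above, the argument is incomplete.
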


Theorem \ref{main} will be applied to the group factor $\vn(\Gamma)$.
Recall that if $\Gamma$ is a group, then the von Neumann algebra $\vn(\Gamma)$ is the convolution algebra
$$\vn(\Gamma)=\{f\in\ell^2(\Gamma)\, :\, f\star \ell^2(\Gamma)\subseteq\ell^2(\Gamma)\}\,.$$
It is well known that if $\Gamma$ is an infinite conjugacy class group then $\vn(\Gamma)$ is a factor of type $\tto$.
This is true if $\Gamma$ is a lattice in a semisimple Lie group \cite[Lemma 3.3.1]{GHJ}.
If $\Gamma_0$ is a subgroup of $\Gamma$, then
$\vn(\Gamma_0)$ embeds naturally as a subalgebra of $\vn(\Gamma)$
 via $f\mapsto \overline f$, where
\begin{equation*} \overline f(x) =\begin{cases}
f(x)& \text{if $x\in\Gamma_0$},\\
0& \text{otherwise}.
\end{cases}
\end{equation*}
This article is concerned with examples where $\Gamma_0=A$ is an abelian
subgroup of $\Gamma$ and $\fk A = \vn(A)$ is a maximal abelian
$\star$-subalgebra (masa) of $\fk M = \vn(\Gamma)$. Recall that
$\fk A$ is the von Neumann subalgebra of $B(\ell^2(\Gamma))$
defined by the left convolution operators
$$\lambda(f) : \phi\mapsto f\star \phi$$
where $f\in\ell^2(A)$ and $f\star
\ell^2(\Gamma)\subseteq\ell^2(\Gamma)$. The algebra $\fk A$ also
acts on $\ell^2(\Gamma)$ by right convolution
$$\rho(f) : \phi\mapsto \phi\star f.$$
Let $\fk A^{opp}$ be the von Neumann
subalgebra of $B(\ell^2(\Gamma))$ defined by this right action of
$\fk A$.
Let ${\fk B}$ be the von Neumann subalgebra of $B(\ell^2(\Gamma))$ generated by $\fk A~\cup~\fk A^{opp}$
 and let $p$ denote the orthogonal projection of
$\ell^2(\Gamma)$ onto the closed subspace generated by $\fk A$.
Then $p$ is in the centre of the commutant ${\fk B}'$, and ${\fk B}'p$ is
abelian. The von Neumann algebra ${\fk B}'(1 - p)$ is of type $I$
and may therefore be expressed as a direct sum $\fk B_{n_1}\oplus
\fk B_{n_2} \oplus \dots$ of algebras $\fk B_{n_i}$ of type
$I_{n_i}$, where $1\le n_1 < n_2 < \dots \le \infty$. The {\it
Puk\'anszky invariant} \cite [Chapter 7]{SS2} is the set $\{n_1, n_2, \dots
\}$.  It is an isomorphism invariant of the pair $(\fk A,\fk M)$. It has been shown \cite[Corollary 3.3]{NS}
that each nonempty subset $S$ of the natural numbers containing $1$ can be realized as the
Puk\'anszky invariant of some masa in the hyperfinite $II_1$
factor $R$. This was extended \cite{SS, DSS} to subsets $S$ containing $\infty$  for $R$ and for the free group factor.
It was later extended \cite{Wh} to arbitrary subsets $S~\subset~\bb N~\cup\{\infty\}$ for $R$ (and for certain other McDuff factors).

It is known that every factor of type $\tto$ contains a singular masa \cite{Pop}. S. Popa \cite [Remark 3.4]{po} showed that if the Puk\'anszky invariant of $(\fk A,\fk M)$ does not contain $1$, then $\fk A$ is a singular masa in $\fk M$. K. Dykema \cite{Dyk} has  shown (using Voiculescu's free entropy dimension) that the Puk\'anszky invariant of any masa in the free group factor must either contain $\infty$ or be unbounded.
This means that it is not possible for any singleton other than $\{\infty\}$ to be a possible Puk\'anszky invariant occurring in every $\tto$ factor.
Jolissaint \cite{Jol} has shown that
if $F_0$ is the cyclic subgroup generated by the first generator of Thompson's group $F$ then $\vn(F_0)$ has Pukansky invariant $\{\infty\}$.
A natural question arises.
\begin{itemize}
\item[$\bullet$] Does every $\tto$ factor $\fk M$ contain a masa $\fk A$ with Puk\'anszky invariant $\{\infty\}$?
\end{itemize}

This article uses Theorem \ref{main} to provide an affirmative answer for $\fk M = \vn(\Gamma)$, where $\Gamma$ is a torsion-free uniform lattice subgroup of a connected semisimple real algebraic group $G$ without compact factors. If $G$ has $\bb R$-rank $\ge 2$, then $\vn(\Gamma)$ has Kazhdan's property $T$. This is the first result on possible values of the Puk\'anszky invariant in a $\tto$ factor with property $T$.

Many thanks are due to G. Prasad and Y. Shalom for their assistance with the proofs of Theorem \ref{main} and Theorem \ref{double} respectively. S. White provided valuable background information.

\bigskip

\section{Malnormal abelian subgroups of lattices}

This section is devoted to the proof of Theorem \ref{main}.
Let $G$ be a connected semisimple real algebraic group  and let $d$ be the $\bb R$-rank of $G$. Assume that $G(\bb R)$ has no compact factors
and let $\Gamma$ be a torsion-free uniform lattice subgroup of $G(\bb R)$.

Since $\Gamma$ is finitely generated, $\Gamma < G(\bb K)$ for some finitely generated subfield $\bb K$ of $\bb R$. By the Borel Density Theorem
\cite[Chapter II, Corollary 4.4]{Mar}, $\Gamma$ is Zariski dense in $G(\bb R)$. Therefore, according to Theorems 1 and 2 of \cite{PR}, there exists a maximal abelian torus subgroup $T$ of $G$ with the following properties.
\begin{itemize}
\item[(1)] The $\bb R$-rank of $T$ is $d$.
\item[(2)] $A = T(\bb R)\cap\Gamma$ is a uniform lattice in $T(\bb R)$.
\item[(3)] $T$ has no proper algebraic subgroups defined over $\bb K$.
\end{itemize}
Moreover, $T$ is the $\bb K$-Zariski closure of a single $\bb R$-regular element $x_0$ in $A$ \cite{PR}.
In fact there are many such elements $x_0$ \cite[Remark 2]{PR}.
We claim that $A$ is a malnormal subgroup of $\Gamma$.
To this end, fix an arbitrary element $x \in \Gamma - A$. We must show that $xAx^{-1}~\cap A~=~\{ 1 \}$.

Let $T_s$ (respectively $T_a$) be the maximal $\bb R$-split (respectively $\bb R$-anisotropic) subtorus of $T$. Then $T=T_s\cdot T_a$ (an almost direct product) \cite [Proposition 8.15]{Bo} and $T_s$ is a maximal $\bb R$-split torus in $G$ \cite[Remark 1]{PR}. Thus $T(\bb R) = S \cdot C$,  where $S=T_s(\bb R)\cong \bb R^d$ and $C=T_a(\bb R)\cong (\bb R/\bb Z)^r$, where $r$ is the dimension of $T_a$.
Since $\Gamma$ is torsion free and discrete, $A$ is a uniform lattice in $S$. In particular, $A\cong\bb Z^d$.

Since $T$ is the $\bb K$-Zariski closure of $A$, it follows that $T$ is defined over
$\bb K$. Since $x\in\Gamma\subseteq G(\bb K)$,
$x T x^{-1}$ is also defined over $\bb K$. According to condition (3), there are only two possibilities:

\begin{itemize}
\item[$\bullet$] $T \cap x T x^{-1}=\{1\}$;
\item[$\bullet$] $T = x T x^{-1}$.
\end{itemize}

In the first case we also have $T(\bb K) \cap x T(\bb K) x^{-1}=\{1\}$, and {\it a fortiori} $A \cap x A x^{-1}=\{1\}$, as required.

To show that the second case does not occur,
assume that $T~=~x T x^{-1}$. This implies that $T(\bb R)$ is stable under
conjugation by $x$.  Also $\Gamma$ is stable under
conjugation by $x$.
Therefore $x A x^{-1} = A$, since $A = \Gamma\cap T(\bb R)$.
There are two possibilities to consider for the action $\alpha_x : a \mapsto xax^{-1}$
on $A$.

\begin{itemize}
\item[(a)] $\alpha_x$ fixes only the trivial element of $A$;
\item[(b)] $\alpha_x$ fixes some nontrivial element of $A$.
\end{itemize}

Since conjugation by $x$ stabilizes $T$, it also stabilizes
$T_s$ and $T_a$ separately \cite[Proposition 8.15(3)]{Bo}. Thus $xSx^{-1}=S$.
The symmetric space of $G(\bb R)$ is $X=G(\bb R)/K$, where $K$ is a maximal compact subgroup of $G(\bb R)$.
The group $\Gamma$  acts freely on $X$, since it is torsion free.
Since $T_s$ is a maximal $\bb R$-split torus of $G$, there is a unique flat $F$ in $X$ such that
$SF=F$ and $S$ acts simply transitively on $F$ \cite[Lemma 5.1]{Mos}. Now $xF$ is another such flat, since
$$SxF = (xSx^{-1})xF = xSF = xF.$$
Hence $xF = F$.

The action of $x$ on $F$ is by some rigid motion and the action of $A$ on $F$ is by translations.
No nontrivial element in $\Gamma$ can act trivially on $F$, so we can
calculate the conjugation by $x$ of any element $y\in A$ by
considering the actions of these elements on $F$.
The two cases above correspond to:

\begin{itemize}
\item[(a)] $x$ acts on $F$ by a rigid motion whose linear part has trivial
  $1$-eigenspace;
\item[(b)]  $x$ acts on $F$ by a rigid motion whose linear part has
  nontrivial $1$-eigenspace.
\end{itemize}

In case (a), $x$ necessarily has a fixed point in $F$.
Therefore $x=1$, since $\Gamma$ acts freely on $X$.

Consider case (b). The algebraic subgroup $Z_G(x)\cap T$, which
consists of the elements commuting with $x$, is defined over $\bb
K$.  In case (b), $Z_G(x)\cap T$ contains nontrivial elements of $A$.
That is, it has nontrivial $\bb K$-points.  Hence, it must be nontrivial
(as an algebraic group). By condition (3) it must be all of $T$.  Hence $x$
commutes with every element of $T$.  Therefore the algebraic closure of
$\{x\} \cup T$ over $\bb K$ is commutative.  However $T$ is a maximal
abelian subgroup over $\bb K$, and so $x\in T(\bb
K)$. Therefore $x\in A$, contrary to assumption.
This completes the proof of Theorem \ref{main}.

\bigskip
\section{The Puk\'anszky invariant}

The following result was proved in \cite[Proposition 3.6]{RS} and later extended in \cite[Theorem 4.1]{SS}.
\begin{proposition} \label{3e} Suppose that $A$ is an abelian subgroup of a countable group $\Gamma$
such that  $\fk A = \vn(A)$ is a masa of $\vn(\Gamma)$. If $A$ is
malnormal in $\Gamma$ then the Puk\'anszky invariant of $\fk A$ contains precisely one element $n = \#(A
\backslash \Gamma /A-\{A\})$.
\end{proposition}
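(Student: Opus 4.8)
The plan is to compute the Pukánszky invariant directly from the action of $\fk B = \vn(\fk A \cup \fk A^{opp})$ on $\ell^2(\Gamma)$, exploiting that malnormality forces the double cosets to interact with $A$ in a very rigid way. First I would decompose $\ell^2(\Gamma)$ according to the double coset space $A\backslash\Gamma/A$. Each double coset $AxA$ spans a closed $\fk B$-invariant subspace $\ell^2(AxA)$, and $\ell^2(\Gamma) = \bigoplus_{AxA} \ell^2(AxA)$ as $\fk B$-modules. The double coset $AeA = A$ corresponds exactly to the projection $p$ onto the subspace generated by $\fk A$, so the type~$I$ part $\fk B'(1-p)$ decomposes along the nontrivial double cosets. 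The whole problem thus reduces to identifying the type of the commutant of $\fk B$ restricted to each $\ell^2(AxA)$ with $x\notin A$.

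Next I would analyze a single nontrivial double coset. The key observation is that malnormality controls the stabilizers: for $x\in\Gamma-A$, the two-sided action of $A$ on the coset $AxA$ has trivial stabilizers precisely because $xAx^{-1}\cap A = \{1\}$. Concretely, the map $A\times A \to AxA$, $(a,b)\mapsto axb$, is injective away from the relation $axb = a'xb'$, which forces $a'^{-1}a = x b' b^{-1} x^{-1} \in A \cap xAx^{-1} = \{1\}$. Hence each nontrivial double coset $AxA$ is, as an $(A,A)$-biset, a free biset isomorphic to $A\times A$. Consequently $\ell^2(AxA) \cong \ell^2(A)\,\overline{\otimes}\,\ell^2(A)$ as a bimodule, with $\fk A$ acting on the left tensor factor and $\fk A^{opp}$ on the right. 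Since $\fk A \cong L^\infty(\widehat A)$ is a diffuse abelian algebra, the commutant of the left-right action on $\ell^2(A)\,\overline{\otimes}\,\ell^2(A)$ is $\fk A \,\overline{\otimes}\, \fk A$ acting on itself, which is abelian and hence the representation is of homogeneous type $I_1$ per double coset.

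The crux is then to see how these pieces assemble into a single multiplicity. The group $A$ acts on the set of nontrivial double cosets by $AxA \mapsto AaxA$ (say on the left); because each piece is a free biset, grouping the double cosets into $A$-orbits shows that $\fk B'(1-p)$ is homogeneous of type $I_n$, where $n = \#(A\backslash\Gamma/A - \{A\})$ counts the nontrivial double cosets (with the understanding that when this number is infinite the invariant is $\{\infty\}$). The main obstacle I expect is bookkeeping the multiplicity correctly: one must verify that the commutant does not split into several distinct homogeneous summands $I_{n_i}$ with $n_i$ varying, but rather collapses to the single value $n$. This requires checking that the left regular action of $\fk A$ tying together the different double cosets acts with uniform multiplicity — essentially that $A$ acts freely and transitively on the fibers so that no further decomposition of $\fk B'(1-p)$ survives. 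Once the biset freeness from malnormality is in hand, this uniformity should follow, yielding exactly one element $n$ in the Pukánszky invariant.
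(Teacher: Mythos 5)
Your overall strategy is the standard one and is essentially the argument behind the result the paper quotes (the paper does not reprove Proposition~\ref{3e}; it cites \cite{RS} and \cite{SS}): decompose $\ell^2(\Gamma)$ over $A\backslash\Gamma/A$, use malnormality to show that $(a,b)\mapsto axb$ is a bijection of $A\times A$ onto each nontrivial double coset, identify $\ell^2(AxA)$ with $\ell^2(A)\,\overline{\otimes}\,\ell^2(A)$ carrying the left--right action of $\fk A\,\overline{\otimes}\,\fk A^{opp}$, note that this is a masa in $B(\ell^2(A)\,\overline{\otimes}\,\ell^2(A))$ so that each block is multiplicity-free, and count blocks. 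Your first two paragraphs carry this out correctly (diffuseness of $\fk A$ is not needed there, only that $\vn(A)$ acting on $\ell^2(A)$ is maximal abelian, which holds for any abelian $A$).

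The third paragraph, however, justifies the final assembly by a mechanism that does not work as stated. Left multiplication by $a\in A$ sends $AxA$ to $AaxA=AxA$, so the ``action of $A$ on the set of nontrivial double cosets'' is trivial: every orbit is a singleton, and grouping double cosets into $A$-orbits cannot produce the homogeneity of $\fk B'(1-p)$; similarly, ``$A$ acts freely and transitively on the fibers'' is not the relevant statement. What you actually need --- and what your second paragraph already provides --- is that the representations of $\fk B$ on the subspaces $\ell^2(AxA)$, for the various $x\notin A$, are \emph{mutually unitarily equivalent}: each is identified, independently of $x$, with the standard (multiplicity-free) representation of $L^\infty(\widehat A\times\widehat A)$ on $L^2(\widehat A\times\widehat A)$. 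A direct sum of $n$ copies of one fixed multiplicity-free representation of an abelian von Neumann algebra has commutant of homogeneous type $I_n$ (including $n=\infty$), so $\fk B'(1-p)$ is type $I_n$ with $n=\#(A\backslash\Gamma/A-\{A\})$ and the Puk\'anszky invariant is the singleton $\{n\}$. With that substitution in place of the orbit argument, your proof is complete.
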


In view of Theorem \ref{main}, the next result is enough to provide examples of masas with Puk\'anszky invariant $\{\infty\}$.

\begin{theorem}\label{double}
Let $G$ be a connected semisimple real algebraic group. Assume that $G(\bb R)$ has no compact factors.
Let $\Gamma$ be a torsion free uniform lattice subgroup of $G(\bb R)$ and let $A < \Gamma$ be an abelian subgroup.Then
     $$\#(A \backslash \Gamma / A) = \infty.$$
\end{theorem}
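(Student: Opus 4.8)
The plan is to argue by contradiction using growth rates. Suppose that $\#(A\backslash\Gamma/A)=n<\infty$, so that $\Gamma=\bigsqcup_{i=1}^{n}Ax_iA$ for finitely many $x_i\in\Gamma$. Since $\Gamma$ is a torsion-free uniform lattice it is finitely generated; fix a word metric with balls $B_r$. If $A$ is trivial the claim is immediate (and $A$ is torsion-free, so it is either trivial or infinite), so assume $A\neq\{1\}$. Because $\Gamma$ is torsion-free and uniform, every nontrivial element of $A$ is a semisimple isometry of the symmetric space $X=G(\bb R)/K$, which is $\mathrm{CAT}(0)$ of noncompact type. By the Flat Torus Theorem the free abelian group $A\cong\bb Z^k$ stabilises a flat $F\cong\bb R^k$ on which it acts cocompactly by translations; writing $v_a$ for the translation vector of $a$, setting $\Lambda=\{v_a:a\in A\}\cong\bb Z^k$, and fixing $o\in F$, we obtain $|a|\asymp d_X(o,ao)\asymp\|v_a\|$. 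As a uniform lattice $\Gamma$ is quasi-isometric to $X$, so $|\gamma|\asymp d_X(o,\gamma o)$ for all $\gamma\in\Gamma$.

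The heart of the argument is the estimate that each double coset has only polynomial growth: for every $x\in\Gamma$ there is a constant $C$ with $\#(AxA\cap B_r)\le Cr^{2k}$. To see this I would decompose $AxA$ into left cosets $axA$ indexed by $[a]\in A/H$, where $H=A\cap xAx^{-1}$. First I bound the number of such cosets meeting $B_r$: a coset $axA$ meets $B_r$ only if $d_X(o,axF)=d_X(o-v_a,xF)\le Cr$, and the convex function $v\mapsto d_X(o-v,xF)$ on $F$ stays bounded exactly along the subspace of directions in which $F$ is parallel to $xF$ (this subspace corresponds to $H\otimes\bb R$) and grows linearly in the complementary transverse directions; hence only of order $r^{k}$ cosets in $A/H$ meet $B_r$. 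Second, within a single coset $axA$ the orbit points lie on the flat $axF$ and are spaced like the lattice $\Lambda$, so only of order $r^{k}$ of them lie within distance $Cr$ of $o$. Multiplying the two bounds gives $\#(AxA\cap B_r)\le Cr^{2k}$.

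Granting this estimate, $\#(B_r\cap\Gamma)\le\sum_{i=1}^{n}\#(Ax_iA\cap B_r)$ is bounded by a polynomial in $r$, so $\Gamma$ would have subexponential growth. This contradicts the fact that a uniform lattice in a noncompact semisimple group without compact factors has exponential growth: it is quasi-isometric to $X$, whose volume grows exponentially, and equivalently $\Gamma$ is non-amenable, while a group of subexponential growth is amenable. This contradiction shows $\#(A\backslash\Gamma/A)=\infty$.

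I expect the polynomial-growth estimate for a single double coset to be the main obstacle, and within it the delicate point is the case where $H=A\cap xAx^{-1}$ is infinite, so that infinitely many left cosets occur and the two flats $F$ and $xF$ share translation directions. Making rigorous the claim that only the genuinely transverse directions contribute cosets meeting a given ball—via the structure of the parallel set of $xF$ in the $\mathrm{CAT}(0)$ space $X$—is the crux. For the malnormal subgroup produced by Theorem~\ref{main} one has $H=\{1\}$ and $F,xF$ fully transverse, so that case is clean; the real content here is to treat an arbitrary abelian subgroup $A$.
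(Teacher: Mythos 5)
Your route is genuinely different from the paper's. The paper takes Zariski closures: by the Borel Density Theorem $G(\bb R)=\bigcup_{x\in F}\overline{AxA}$; each $\bar A x\bar A$ is locally closed, so passing to its closure adds only a Haar-null set; and since $\bar A$ is a torus of dimension at most the absolute rank $d'$, with $2d'\le\tfrac23\dim G<\dim G$, the image of $(a_1,a_2)\mapsto a_1xa_2$ has measure zero --- a contradiction with covering $G(\bb R)$. Your argument is a coarse-geometric analogue of the same idea (``double cosets are too small''): each $AxA$ has polynomial growth of degree at most $2k$, while $\Gamma$ has exponential growth. The scaffolding is sound: the Flat Torus Theorem applies (abelian subgroups of groups acting geometrically on $\mathrm{CAT}(0)$ spaces are finitely generated and free abelian here, so $A\cong\bb Z^k$ does act cocompactly on a flat $F$), Milnor--\v{S}varc gives $|\gamma|\asymp d_X(o,\gamma o)$, the left cosets of $A$ inside $AxA$ are indexed by $A/H$ with $H=A\cap xAx^{-1}$, a single coset meets $B_r$ in $O(r^k)$ points (translate by any one element of $axA\cap B_r$ to land in $A\cap B_{2r}$), and exponential growth of $\Gamma$ is standard (non-amenability, or the Tits alternative). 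Your proof also avoids Borel density and all algebraic geometry, at the cost of importing symmetric-space geometry.

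The one genuine gap is exactly where you put your finger: the claim that only $O(r^k)$ cosets $[a]\in A/H$ satisfy $d(a^{-1}o,xF)\le Cr$ is asserted via an appeal to ``parallel sets'' but not proved, and without it the whole estimate collapses. Here is how to close it without any parallel-set machinery. The function $f(p)=d(p,xF)$ is convex on the flat $F$ (distance to a convex subset of a $\mathrm{CAT}(0)$ space), and for $h\in H$ one has $hxF=xF$ and $hF=F$, so $f$ is invariant under the translation lattice $\{v_h:h\in H\}$; a convex function periodic under a lattice spanning $V_H=H\otimes\bb R$ is constant on $V_H$-cosets, hence $f$ descends to a convex function $\bar f$ on $F/V_H\cong\bb R^{k-l}$, and the condition $d(a^{-1}o,xF)\le Cr$ is well defined on $A/H$. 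Along any ray, a convex function is either bounded or eventually bounded below by a nonconstant linear function. If $\bar f$ were bounded along some ray, then a sublevel set $\{f\le C_0\}$ would contain an unbounded solid cone transverse to $V_H$, hence lattice points $a^{-1}o$ from infinitely many distinct $H$-cosets; each such coset $axA$ would then meet a fixed ball $B_{r_0}$ of $\Gamma$, and since distinct cosets are disjoint this would make the finite set $B_{r_0}$ infinite. So $\bar f$ is unbounded along every ray, whence $\{\bar f\le \bar f(0)+1\}$ contains no ray and is compact, and convexity upgrades this to a uniform bound $\bar f(v)\ge c\|v\|-C$. Therefore $\{\bar f\le Cr\}$ has diameter $O(r)$ and contains $O(r^{k-l})$ points of $A/H$, giving $\#(AxA\cap B_r)=O(r^{2k-l})$. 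With that paragraph inserted your proof is complete and correct; note in particular that you never need to identify the bounded directions with $H\otimes\bb R$ --- local finiteness of the family of cosets meeting a bounded set does the work for you.
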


\begin{proof} Suppose that $\#(A \backslash \Gamma / A) < \infty$.
Then
$$\Gamma = \bigcup_{x\in F} A x A$$
 where $F\subset \Gamma$ is finite.  Taking Zariski closures,
 it follows from the Borel Density Theorem that
\begin{equation}\label{one}
  G(\bb R) = \bar\Gamma = \bigcup_{x\in F} \overline{A x A}.
\end{equation}
For each $x\in F$, $\bar A x \bar A$ is locally closed in the Zariski topology, since it is an orbit of $\bar A$ acting on $G(\bb R)/\bar A$ \cite[Corollary 3.1.5]{Z}.  This means that $\bar A x \bar A = U\cap E$ where $U$ is Zariski-open and $E$ is Zariski closed.  Therefore
$$\overline{A x A}
  = \overline{U\cap E}
  \subseteq  E
  \subseteq  (U\cap E) \cup (G(\bb R)\setminus U)
  = (\bar A x \bar A) \cup (G(\bb R)\setminus U).$$

  Since $U\subseteq G(\bb R)$ is Zariski open (and $G(\bb R)$ is Zariski connected), $G(\bb R)\backslash U$ has measure zero, relative to Haar measure $\mu$ on $G(\bb R)$ \cite[Chapter I, Proposition 2.5.3]{Mar}. Therefore,
  \begin{equation}\label{two}
  \mu(\overline {A x A})=\mu(\bar A x \bar A).
  \end{equation}
  Now we show that $\mu(\bar A x \bar A)=0$.
Each element of $A$ is semisimple, since each element of $\Gamma$ is \cite[Section 11]{Mos}. Therefore each element of the Zariski closure $\bar A$ is also semisimple; in other words, $\bar A$ is a torus subgroup. The dimension of $\bar A$ as a Lie group is no larger than the absolute rank $d'$ of $G$ (the rank over $\bb C$ of the Lie algebra of $G$).
The number of positive roots of the compexified Lie algebra of $G$ is at least $d'$, and the total number of roots is at least $2d'$. Thus the total dimension of the root spaces is at least $2d'$.  This means that $d'$ is at most one third of the dimension of $G$. Thus the dimension of $\bar A\times\bar A$ is at most two thirds the dimension of $G$.
The map $(a_1,a_2)\mapsto a_1 x a_2$ from $\bar A\times\bar A$ to $G(\bb R)$ is $C^\infty$ (in fact polynomial).  Therefore, by the above dimension count, its image has measure zero.  It follows from (\ref{two}) that $\mu(\bar A x \bar A)=0$. However, this contradicts (\ref{one}), thereby proving the result.
\end{proof}

An immediate consequence of Theorem \ref{main} and Theorem \ref{double} is

\begin{corollary} Let $G$ be a connected semisimple real algebraic group such that $G(\bb R)$ has no compact factors. Let $\Gamma$ be a uniform lattice subgroup of $G(\bb R)$. Then there exists an abelian subgroup $A<\Gamma$ such that $\vn(A)$ is a masa of $\vn(\Gamma)$ with Puk\'anszky invariant $\{\infty\}$.
\end{corollary}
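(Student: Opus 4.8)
The plan is to read the Corollary off from the three results already in hand, the only genuinely new ingredient being that malnormality of $A$ forces $\fk A=\vn(A)$ to be a masa, which is precisely the standing hypothesis of Proposition~\ref{3e}. Accordingly I would first apply \thmref{main} to produce a malnormal abelian subgroup $A\cong\bb Z^d$ of $\Gamma$. Since $\Gamma$ is a lattice in a semisimple group with no compact factors it is an infinite conjugacy class group, so by the remark in the introduction $\fk M=\vn(\Gamma)$ is a factor of type $\tto$, and the notion of Puk\'anszky invariant for the pair $(\fk A,\fk M)$ is available.

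Next I would verify the masa property by a direct Fourier-coefficient computation of the relative commutant $\vn(A)'\cap\fk M$. Writing a candidate element as $T=\sum_{g\in\Gamma}c_g\lambda(g)$ with $\sum_g|c_g|^2<\infty$, commutation with $\lambda(a)$ for every $a\in A$ is equivalent to the coefficients being constant along the conjugation orbits $g\mapsto aga^{-1}$, that is $c_{aga^{-1}}=c_g$. The key point is that malnormality makes every such orbit with $g\notin A$ infinite: if $a\in A$ fixes $g$ then $gag^{-1}=a\in gAg^{-1}\cap A=\{1\}$, so the stabilizer of $g$ in $A$ is trivial and, $A$ being infinite, the orbit of $g$ is infinite. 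Square-summability then forces $c_g=0$ for every $g\notin A$, so $T\in\vn(A)$. Hence $\vn(A)'\cap\fk M=\vn(A)$, i.e. $\fk A$ is a masa in $\fk M$.

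Finally I would combine the counting results. With $A$ malnormal in $\Gamma$ and $\fk A$ a masa, Proposition~\ref{3e} identifies the Puk\'anszky invariant of $(\fk A,\fk M)$ as the single number $n=\#(A\backslash\Gamma/A-\{A\})$. By \thmref{double} we have $\#(A\backslash\Gamma/A)=\infty$, so $n=\infty$ and the invariant is $\{\infty\}$, which is the assertion of the Corollary.

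The step carrying the real content is the masa verification of the middle paragraph; the outer two paragraphs are direct appeals to \thmref{main}, \thmref{double}, and Proposition~\ref{3e}. The one hypothesis-level subtlety I would flag is torsion: \thmref{main} and \thmref{double} are both stated for torsion-free lattices, so to apply them one works with $\Gamma$ torsion free, and for a general uniform lattice one must first descend to a torsion-free finite-index subgroup. That reduction is not automatic, since malnormality of $A$ in the subgroup need not persist in the larger $\Gamma$, and checking compatibility of the construction with the passage between the two groups is where I would concentrate the remaining effort.
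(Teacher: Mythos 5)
Your proposal follows exactly the paper's route: the Corollary is obtained by combining \thmref{main}, \thmref{double} and Proposition~\ref{3e}, and the paper's own proof consists of the single line that it ``follows immediately from Proposition~\ref{3e}.'' Your middle paragraph correctly supplies the masa verification that the paper leaves implicit (malnormality gives trivial stabilizers, hence infinite conjugation orbits of $A$ on $\Gamma - A$, hence $\vn(A)'\cap\vn(\Gamma)=\vn(A)$), and your closing caveat is well spotted: the Corollary as printed omits the torsion-free hypothesis under which both \thmref{main} and \thmref{double} are actually proved, so strictly speaking the statement should either include that hypothesis or be accompanied by the reduction you describe.
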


\begin{proof}
  This follows immediately from Proposition \ref{3e}.
\end{proof}

\begin{remark}
  A similar result was obtained by geometrical methods in \cite[Theorem 4.6]{Ro}, if $\Gamma$ is the fundamental group of a compact locally symmetric space $M$ of constant negative curvature and $A$ is generated by the homotopy class of a simple closed geodesic in $M$.
\end{remark}

\end{document}